

\documentclass[11pt]{amsart}
\usepackage{graphicx}
\usepackage{amssymb, amsmath}
\vfuzz2pt 
\hfuzz2pt 
\newtheorem{theorem}{Theorem}[section]
\newtheorem{corollary}[theorem]{Corollary}

\newtheorem{proposition}[theorem]{Proposition}
\theoremstyle{definition}

\theoremstyle{remark}

\numberwithin{equation}{section}

\newcommand{\Gf}{\mathrm{der}_{\theta, b}(G, \bullet)}
\newcommand{\Gfi}{\mathrm{der}_{\theta_i, b_i}(G_i, \bullet_i)}

\begin{document}
\title{On profinite polyadic groups}
\author{\sc M. Shahryari}

\address{M. Shahryari\\
 Department of Mathematics,  College of Sciences, Sultan Qaboos University, Muscat, Oman}

\author{\sc M. Rostami}
\thanks{{\scriptsize
\hskip -0.4 true cm MSC(2010):  20N15
\newline Keywords: Polyadic groups; $n$-ary groups; Profinite groups and polyadic groups; Post's cover and retract of a polyadic group}}

\address{M. Rostami\\
Department of Pure Mathematics,  Faculty of Mathematical
Sciences, University of Tabriz, Tabriz, Iran }

\email{m.ghalehlar@squ.edu.om}
\email{m.rostami@tabrizu.ac.ir}
\date{\today}

\begin{abstract}
We study the structure of profinite polyadic groups and we prove that a polyadic topological group $(G, f)$ is profinite, if and only if, it is compact, Hausdorff, totally disconnected. More generally, for a pseudo-variety (or a formation) of finite groups $\mathfrak{X}$, we define the class of $\mathfrak{X}$-polyadic groups, and we show that a polyadic group $(G, f)$ is pro-$\mathfrak{X}$, if and only if, it is compact, Hausdorff, totally disconnected and for every open congruence $R$, the quotient $(G/R, f_R)$ is $\mathfrak{X}$-polyadic.
\end{abstract}

\maketitle


\section{Introduction}

In this article, we study the structure of profinite polyadic groups:  polyadic groups which are the  inverse limit of a system of finite polyadic groups.  A polyadic group is a  natural generalization of the concept of group to the case where the binary operation of group replaced with an $n$-ary associative operation, one variable linear equations in which have unique solutions. So, in this article, {\em polyadic group} means an $n$-ary group for a fixed natural number $n\geq 2$. These interesting algebraic objects are introduced by Kasner and D\"ornte (\cite{Kas} and \cite{Dor}) and  studied extensively by Emil Post during the first decades of the last century, \cite{Post}. During decades, many articles have been published on the structure of polyadic groups. Already,  homomorphisms and automorphisms of polyadic groups are studied in  \cite{Khod-Shah}. A characterization of the simple polyadic groups is obtained by them in \cite{Khod-Shah2}. Also,  the representation theory of polyadic groups is studied in  \cite{Dud-Shah} and  the complex characters of finite polyadic groups are also investigated in \cite{Shah2}. The structure of free polyadic groups is determined in \cite{Artam}, \cite{Khod-Shah3}, and \cite{Khod}.

It is easy to define topological polyadic groups, and so, one can ask which topological polyadic groups are profinite. In this paper, we study this problem and as the main result, we prove that a  polyadic topological group $(G, f)$ is profinite, if and only if, it is compact, Hausdorff, totally disconnected. More generally, for a pseudo-variety (formation) of finite groups $\mathfrak{X}$, we define the class of $\mathfrak{X}$-polyadic groups, and we show that a polyadic group $(G, f)$ is pro-$\mathfrak{X}$, if and only if, it is compact, Hausdorff, totally disconnected and for every open congruence $R$, the quotient $(G/R, f_R)$ is $\mathfrak{X}$-polyadic.

\section{Polyadic groups}
A polyadic group is a pair $(G, f)$ where $G$ is a non-empty set and $f:G^n\to G$ is an $n$-ary operation, such that \\

i- the operation is associative, i.e.
$$
f(x_1^{i-1},f(x_i^{n+i-1}),x_{n+i}^{2n-1})=
f(x_1^{j-1},f(x_j^{n+j-1}),x_{n+j}^{2n-1})
$$
for any $1\leq i<j\leq n$ and for all $x_1,\ldots,x_{2n-1}\in G$, and \\

ii- for all $a_1,\ldots,a_n, b\in G$ and $1\leq i\leq
n$, there exists a unique element $x\in G$ such that
$$
f(a_1^{i-1},x,a_{i+1}^n)=b.
$$

Note that, here we use the compact notation $x_i^j$ for every sequence
$$
x_i, x_{i+1}, \ldots, x_j
$$
of elements in $G$, and in the special case when all terms of this sequence are equal to a fixed $x$, we denote
it by $\stackrel{(t)}{x}$, where $t$ is the number of terms.

Clearly, the case $n=2$ is exactly the definition of ordinary groups. During
this article, we assume that $n$ is fixed. Note that an $n$-ary system $(G,f)$ of the form $f(x_1^n)=x_1
x_2\ldots x_nb$, where $(G,\cdot)$ is a group and $b$  a fixed
element belonging to the center of $(G,\cdot)$, is a polyadic group, which is  called {\em $b$-derived} from the group
$(G,\cdot)$ and it is denoted by $\mathrm{der}_b^n(G, \cdot)$. In the case when $b$ is the identity of $(G,\cdot)$, we
say that such a polyadic group is {\em reduced} to the group $(G, \cdot)$ or {\em derived} from $(G, \cdot)$ and we use the notation
$\mathrm{der}^n(G, \cdot)$ for it. For every $n>2$, there
are $n$-ary groups which are not derived from any group. A polyadic group $(G,f)$ is derived from some group if and only if, it contains
an element $a$ (called an {\em $n$-ary identity}) such that
$$
 f(\stackrel{(i-1)}{a},x,\stackrel{(n-i)}{a})=x
$$
holds for all $x\in G$ and for all $i=1,\ldots,n$, see \cite{Dud2}.

From the definition of an $n$-ary group $(G,f)$, we can directly see that for every $x\in G$, there exists only one $y\in G$,  satisfying
the equation
$$
f(\stackrel{(n-1)}{x},y)=x.
$$
This element is called {\em skew} to $x$ and it is denoted by
$\overline{x}$.    As D\"ornte \cite{Dor} proved, the following identities hold for all $\,x,y\in G$, $2\leq i\leq n$,
$$
f(\stackrel{(i-2)}{x},\overline{x},\stackrel{(n-i)}{x},y)=
f(y,\stackrel{(n-i)}{x},\overline{x},\stackrel{(i-2)}{x})=y.
$$
These identities together with the associativity identities, axiomatize the variety of polyadic groups in the algebraic language $(f, ^{-})$.

Suppose $(G, f)$ is a polyadic group and $a\in G$ is a fixed element. Define a binary operation
$$
x\bullet y=f(x,\stackrel{(n-2)}{a},y).
$$
Then $(G, \bullet)$ is an ordinary group, called the {\em retract} of $(G, f)$ over $a$. Such a retract will be denoted by $\mathrm{ret}_a(G,f)$. All
retracts of a polyadic group are isomorphic. The identity of the group $(G,\bullet)$ is $\overline{a}$. One can verify
that the inverse element to $x$ has the form
$$
y=f(\overline{a},\stackrel{(n-3)}{x},\overline{x},\overline{a}).
$$

One of the most fundamental theorems of polyadic group is the following, now known as {\em Hossz\'{u} -Gloskin's theorem}. We will use
it frequently in this article and the reader can use \cite{DG}, \cite{Hos} and \cite{Sok} for detailed discussions.

\begin{theorem}
Let $(G,f)$ be   a polyadic group. Then there exists an ordinary group $(G, \bullet)$,
an automorphism $\theta$ of $(G, \bullet)$ and an element $b\in G$ such that\\

1.  $\theta(b)=b$,\\

2.  $\theta^{n-1}(x)=b x b^{-1}$, for every $x\in G$,\\

3.  $f(x_1^n)=x_1\theta(x_2)\theta^2(x_3)\cdots\theta^{n-1}(x_n)b$, for all $x_1,\ldots,x_n\in G$.

\end{theorem}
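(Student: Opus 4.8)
The plan is to manufacture all three data $(G,\bullet)$, $\theta$ and $b$ from a single retract together with one distinguished element. Fix $a\in G$ and take $(G,\bullet)=\mathrm{ret}_a(G,f)$, so that $x\bullet y=f(x,\stackrel{(n-2)}{a},y)$; by the discussion preceding the statement this is an ordinary group with identity $e=\overline{a}$, which supplies the group required by the theorem. I would then set $b:=f(\stackrel{(n)}{e})$ and define $\theta\colon G\to G$ by the relation $\theta(x)\bullet b=f(e,x,\stackrel{(n-2)}{e})$, that is $\theta(x)=f(e,x,\stackrel{(n-2)}{e})\bullet b^{-1}$, where the $\bullet$-inverse is the one written out above. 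These choices are not arbitrary: specializing the desired formula~3 to $x_1=\cdots=x_n=e$ forces $b=f(\stackrel{(n)}{e})$, and specializing to $x_1=e$, $x_2=x$, $x_3=\cdots=x_n=e$ forces the defining relation for $\theta$, once one uses that an automorphism must fix $e$.

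With the data in hand I would verify the three conclusions, leaving property~3 for last since the other two drop out of it. For property~3 I would prove
$$
f(x_1^n)=x_1\bullet\theta(x_2)\bullet\theta^2(x_3)\bullet\cdots\bullet\theta^{n-1}(x_n)\bullet b
$$
by expanding the right-hand side into a single long $f$-product: each $\bullet$ inserts a block $\stackrel{(n-2)}{a}$ between consecutive factors, and after substituting the definitions of $\theta^k$ and $b$ the interior blocks of $a$'s and $e$'s must telescope away by associativity together with D\"ornte's identities, collapsing the resulting length-$\big(n(n-1)+1\big)$ product back to the length-$n$ product $f(x_1^n)$. Granting this, property~1 ($\theta(b)=b$) follows by a direct specialization, and property~2 ($\theta^{n-1}(x)=b\bullet x\bullet b^{-1}$) follows by writing a suitable $(2n-1)$-fold $f$-product in two associativity-equal ways: pushing an argument once through a full cycle of the $n$ slots reproduces conjugation by $b$, which is precisely the polyadic meaning of property~2.

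The genuinely substantive step, and the one I expect to be the main obstacle, is to show beforehand that $\theta$ is an automorphism of $(G,\bullet)$. Bijectivity is easy, since the equation defining $\theta(x)$ can be solved uniquely for $x$ using axiom~(ii). Multiplicativity, $\theta(x\bullet y)=\theta(x)\bullet\theta(y)$, is where the index bookkeeping must be handled with care: one expands $\theta(x)\bullet\theta(y)$ as a single long $f$-product, cancels the interior occurrences of $e$ and $a$ using the D\"ornte identities $f(\stackrel{(i-2)}{z},\overline{z},\stackrel{(n-i)}{z},u)=u$, and recognizes the outcome as $\theta(x\bullet y)$. Once multiplicativity and formula~3 are established, everything else reduces to routine manipulation with associativity and the skew-element identities.
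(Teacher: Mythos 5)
This is the classical Hossz\'{u}--Gluskin theorem, which the paper does not prove but cites to \cite{DG}, \cite{Hos} and \cite{Sok}, so there is no in-paper argument to compare against. Your construction is nevertheless the standard one: your $b=f(\stackrel{(n)}{e})$ with $e=\overline{a}$ is exactly Sokolov's $b=f(\overline{a},\ldots,\overline{a})$, and your $\theta(x)=f(e,x,\stackrel{(n-2)}{e})\bullet b^{-1}$ coincides with Sokolov's $\theta(x)=f(\overline{a},x,\stackrel{(n-2)}{a})$ --- the very formula the paper invokes later in the proof of Proposition 3.2. The ``forcing'' observations correctly identify the unique candidates for $b$ and $\theta$ given the retract, so the skeleton is right.

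As written, however, the proposal is a blueprint rather than a proof: the two steps that constitute the actual content of the theorem --- the multiplicativity $\theta(x\bullet y)=\theta(x)\bullet\theta(y)$ and the telescoping identity behind conclusion 3 --- are asserted to collapse ``by associativity together with D\"ornte's identities'' but never carried out, and you yourself flag the first as the main obstacle without resolving it. A concrete difficulty with your normalization is that conclusion 3 needs $\theta^{k}$ explicitly, and iterating $\theta(x)=f(e,x,\stackrel{(n-2)}{e})\bullet b^{-1}$ produces nested occurrences of $b^{-1}$ (itself a four-entry $f$-expression) that do not visibly telescope. The standard remedy is to take $\theta(x)=f(\overline{a},x,\stackrel{(n-2)}{a})$ as the definition, for which associativity gives $\theta^{k}(x)$ as a single long product with $k$ copies of $\overline{a}$ on the left of $x$ and $k(n-2)$ copies of $a$ on the right; both multiplicativity and the expansion of $f(x_1^n)$ then follow by inserting the neutral blocks $\stackrel{(n-2)}{a},\overline{a}$ and $\overline{a},\stackrel{(n-2)}{a}$ supplied by D\"ornte's identities, and your formula for $\theta$ becomes a consequence rather than a definition. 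Until those computations are written down (or the agreement with Sokolov's construction is established and the cited literature invoked), the argument is incomplete, though nothing in it is headed in a wrong direction.
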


According to this theorem, we  use the notation $\Gf$ for $(G,f)$
and we say that $(G,f)$ is $(\theta, b)$-derived from the group $(G,\bullet)$.

There is one more important ordinary group associated to a polyadic group which we call it the {\em Post's cover}. This is the first fundamental theorem concerning polyadic groups. The proof can be find in \cite{Post}.

\begin{theorem}
Let $(G, f)$ be a polyadic group. Then, there exists a unique group $(G^{\ast}, \circ)$  such that \\

1- $G$ is contained in $G^{\ast}$ as a coset of some  normal subgroup $K$.\\

2- $K$ is isomorphic to a retract of $(G, f)$.\\

3- We have $G^{\ast}/K\cong \mathbb{Z}_{n-1}$.\\

4- Inside $G^{\ast}$, for all $x_1, \ldots, x_n\in G$, we have $f(x_1^n)=x_1\circ x_2\circ \cdots \circ x_n$. \\

5- $G^{\ast}$ is generated by $G$.
\end{theorem}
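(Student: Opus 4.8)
The plan is to obtain existence from the Hosszú--Gloskin theorem and to settle uniqueness by a normal-form argument over $G$. So fix the data $(G,\bullet)$, $\theta$, $b$ furnished by that theorem, so that $f(x_1^n)=x_1\bullet\theta(x_2)\bullet\cdots\bullet\theta^{n-1}(x_n)\bullet b$ with $\theta(b)=b$ and $\theta^{n-1}(x)=bxb^{-1}$ for all $x$. For existence I would form the semidirect product $H=(G,\bullet)\rtimes_\theta\mathbb{Z}$, whose elements are pairs $(g,k)$ multiplied by $(g,k)(h,l)=(g\bullet\theta^{k}(h),\,k+l)$. A short computation using $\theta(b)=b$ and $\theta^{n-1}(x)=bxb^{-1}$ shows that $z=(b^{-1},n-1)$ is central and of infinite order: indeed both $z(h,l)$ and $(h,l)z$ equal $(hb^{-1},\,l+n-1)$. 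I then set $G^{\ast}=H/\langle z\rangle$.

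The verification of (1)--(5) is then routine. The assignment $[(g,k)]\mapsto k\bmod (n-1)$ is a well-defined surjection $G^{\ast}\to\mathbb{Z}_{n-1}$ (since $z$ has second coordinate $n-1$), whose kernel $K$ is the image of $\{(g,0):g\in G\}$; one checks $g\mapsto[(g,0)]$ is an isomorphism of $(G,\bullet)$ onto $K$, which yields (2) and (3). Identifying $x\in G$ with $[(x,1)]$ realizes $G$ as the coset of $K$ lying over the residue $1$, giving (1), and since $[(g,1)]\circ[(h,1)]^{-1}=[(g\bullet h^{-1},0)]$ ranges over all of $K$ these coset elements generate $G^{\ast}$, giving (5). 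Finally, using $z^{-1}=(b,-(n-1))$ and $\theta^{n}(b)=b$, the computation
\[
[(x_1,1)]\cdots[(x_n,1)]=[(x_1\bullet\theta(x_2)\bullet\cdots\bullet\theta^{n-1}(x_n),\,n)]=[(f(x_1^n),1)]
\]
establishes (4).

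For uniqueness, let $(G^{\ast},\circ)$ be \emph{any} group satisfying (1)--(5). Reading the D\"ornte identity $f(\stackrel{(n-1)}{x},\overline{x})=x$ inside $G^{\ast}$ via (4) gives $x^{n-1}\circ\overline{x}=x$, hence $\overline{x}=x^{2-n}$ and so $x^{-1}=\overline{x}\circ\stackrel{(n-3)}{x}$ is a positive product of elements of $G$. Combined with (5) this shows every element of $G^{\ast}$ is a product $x_1\circ\cdots\circ x_m$ of elements of $G$, on which the map to $\mathbb{Z}_{n-1}$ records $m\bmod(n-1)$ and, for $m\equiv 1$, the value is exactly the iterated operation $f$ of the word. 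Fixing $a\in G$, this lets me recover from $(G,f)$ alone the three data that determine the cyclic extension $1\to K\to G^{\ast}\to\mathbb{Z}_{n-1}\to 1$: the group $K\cong\mathrm{ret}_a(G,f)$ (since $x\circ a^{n-2}\circ y=f(x,\stackrel{(n-2)}{a},y)$), the automorphism of $K$ given by conjugation by $a$, and the element $a^{n-1}\in K$ (expressible as $f(\stackrel{(n)}{a})\circ a^{-1}$). As these agree for any two groups satisfying (1)--(5), the identity on $G$ extends to a well-defined isomorphism between them, which is the asserted uniqueness.

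The main obstacle I anticipate lies in the uniqueness half: one must check carefully that the relation ``two positive words over $G$ represent the same element of $G^{\ast}$'' is governed entirely by $f$ and its iterates and does not secretly depend on the chosen cover, and that the recovered automorphism $\alpha$ and element $c=a^{n-1}$ satisfy the compatibility relations $\alpha(c)=c$ and $\alpha^{\,n-1}=$ (conjugation by $c$) needed to pin the extension down. The existence half, by contrast, reduces to the short centrality and product computations above once the Hosszú--Gloskin data has been fixed.
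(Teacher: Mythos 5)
The paper gives no proof of this statement at all: it cites Post \cite{Post} for the proof and \cite{Shah2} for the explicit construction, so there is nothing internal to compare your argument against. On its own terms, your existence construction is correct and complete: the centrality of $z=(b^{-1},n-1)$ in $(G,\bullet)\rtimes_\theta\mathbb{Z}$ follows exactly as you say from $\theta(b)=b$ and $\theta^{n-1}(x)=bxb^{-1}$, the injectivity of $g\mapsto[(g,0)]$ and $x\mapsto[(x,1)]$ holds because no nontrivial power of $z$ has second coordinate $0$, and the verification of (4) via $\theta^{n}(b)=b$ is right; this is essentially the standard explicit model of the Post cover built from the Hossz\'u--Gloskin data, and since Theorem 2.1 is established independently in the paper's references there is no circularity. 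Two caveats. First, your formulas $\overline{x}=x^{2-n}$ and $x^{-1}=\overline{x}\circ\stackrel{(n-3)}{x}$ require $n\geq 3$; the case $n=2$ is degenerate ($G^{\ast}=K=G$) and should be dispatched separately. Second, and more substantively, the uniqueness half is only an outline: you correctly reduce to showing that the relation ``two positive words over $G$ are equal in $G^{\ast}$'' is determined by $f$ (e.g., pad both words on the right by a common word to reach length $\equiv 1 \pmod{n-1}$ and compare the iterated $f$-values, using cancellation), and that the extension data $(K,\alpha,c)$ together with a chosen lift of the generator reconstruct the multiplication table; but the final step --- that equality of these data yields a well-defined \emph{homomorphism} extending the identity on $G$ --- is asserted rather than checked. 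You flag this honestly as the remaining obstacle, and the approach does close, but as written the uniqueness argument is a sketch rather than a proof.
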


The group $G^{\ast}$ is also universal in the class of all groups having properties 1, 4. More precisely, if $\beta: (G, f)\to \mathrm{der}^n(H, \ast)$ is a polyadic homomorphism, then there exists a unique ordinary homomorphism $h: G^{\ast}\to H$, such that $h_{|_{G}}=\beta$. This universal property characterizes $G^{\ast}$ uniquely. The explicit construction of the Post's cover can be find in \cite{Shah2}.

Finally, we have to mention that the structure of polyadic homomorphisms will be needed in what follows. The reader can see \cite{Khod-Shah} for details.

\begin{theorem}
Suppose $(G,f)=\mathrm{der}_{\theta, b}(G, \cdot)$ and $(H, h)=\mathrm{der}_{\eta,c}(H, \ast)$ are two polyadic groups. Let $\psi: (G,f)\to (H, h)$ be a homomorphism. Then there exists $a\in H$ and an ordinary homomorphism $\phi:(G, \cdot)\to (H, \ast)$, such that $\psi=R(a)\phi$, where $R(a)$ denotes the map $x\mapsto x\ast a$. Further $a$ and $\phi$ satisfy the following conditions;\\
$$
h(\stackrel{(n)}{a})=\phi(b)\ast a\ \ \ and \ \ \
\phi\theta=I_{a}\eta\phi,
$$
where, $I_a$ denotes the inner automorphism $x\mapsto a\ast x\ast
a^{-1}$. Conversely, if $a$ and $\phi$ satisfy the above two conditions, then $\psi=R_a\phi$ is a homomorphism $(G,f)\to  (H,h)$.
\end{theorem}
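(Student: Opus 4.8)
The plan is to build $a$ and $\phi$ directly out of $\psi$ and then to recover every asserted relation by substituting carefully chosen tuples into the homomorphism identity. Write $e$ for the identity of $(G,\cdot)$ and $1$ for the identity of $(H,\ast)$, and set $a=\psi(e)$ together with $\phi(x)=\psi(x)\ast a^{-1}$, so that $\psi=R(a)\phi$ holds by construction and $\phi(e)=1$. Since $(G,f)=\mathrm{der}_{\theta,b}(G,\cdot)$ and $(H,h)=\mathrm{der}_{\eta,c}(H,\ast)$, the homomorphism condition $\psi(f(x_1^n))=h(\psi(x_1),\ldots,\psi(x_n))$ reads
\[
\psi\bigl(x_1\theta(x_2)\cdots\theta^{n-1}(x_n)b\bigr)=\psi(x_1)\ast\eta(\psi(x_2))\ast\cdots\ast\eta^{n-1}(\psi(x_n))\ast c .
\]
After substituting $\psi(z)=\phi(z)\ast a$ and using that $\eta$ is an automorphism, so $\eta^{k}(\phi(x)\ast a)=\eta^k(\phi(x))\ast\eta^k(a)$, the entire argument becomes bookkeeping of the accumulated tail $a\ast\eta(a)\ast\cdots\ast\eta^{n-1}(a)\ast c$.

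First I would substitute $x_1=\cdots=x_n=e$. Since $\theta$ fixes $e$, the left side collapses to $\phi(b)\ast a$ and the right side to $a\ast\eta(a)\ast\cdots\ast\eta^{n-1}(a)\ast c$, which is exactly $h(\stackrel{(n)}{a})$. This at once gives the first required identity $h(\stackrel{(n)}{a})=\phi(b)\ast a$ and, rearranged, pins down the tail as $\eta(a)\ast\cdots\ast\eta^{n-1}(a)\ast c=a^{-1}\ast\phi(b)\ast a$, which I will reuse. Next I would substitute $x_1=u$, $x_2=v$, and $x_3=\cdots=x_n=e$; after cancelling the common trailing $a$ and replacing the tail by the previous expression, everything reduces to the single clean relation
\[
\phi(u\,\theta(v)\,b)=\phi(u)\ast a\ast\eta(\phi(v))\ast a^{-1}\ast\phi(b)\qquad(\star\star)
\]
valid for all $u,v\in G$.

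The remaining assertions all fall out of $(\star\star)$ by specialization. Setting $v=e$ yields $\phi(ub)=\phi(u)\ast\phi(b)$ for every $u$. Setting $u=e$ and then invoking the equality just obtained gives $\phi(\theta(v))\ast\phi(b)=a\ast\eta(\phi(v))\ast a^{-1}\ast\phi(b)$, and cancelling $\phi(b)$ produces the second required condition $\phi\theta=I_a\eta\phi$. Finally, feeding this twisting relation back into $(\star\star)$ rewrites its right-hand side as $\phi(u)\ast\phi(\theta(v))\ast\phi(b)$; putting $w=\theta(v)$, which ranges over all of $G$ because $\theta$ is bijective, and comparing with $\phi(uwb)=\phi(uw)\ast\phi(b)$ gives $\phi(uw)=\phi(u)\ast\phi(w)$, so $\phi$ is an ordinary homomorphism. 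Thus $\psi=R(a)\phi$ with $\phi$ and $a$ satisfying both displayed conditions.

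For the converse I would run the computation in reverse. Assuming $\phi$ is a homomorphism obeying the two conditions, I substitute $\psi=R_a\phi$ into the defining identity, expand the left side using multiplicativity of $\phi$, and iterate $\phi\theta=I_a\eta\phi$ to obtain $\phi(\theta^{k}(z))=P_k\ast\eta^{k}(\phi(z))\ast P_k^{-1}$ with $P_k=a\ast\eta(a)\ast\cdots\ast\eta^{k-1}(a)$; the factors $P_{k-1}^{-1}P_k=\eta^{k-1}(a)$ then make the product telescope exactly into the right side, the final term closing up precisely by the condition $h(\stackrel{(n)}{a})=\phi(b)\ast a$. I expect the only genuine obstacle to be the forward direction's bookkeeping: tracking where the constant tail is inserted and, above all, extracting the two interdependent facts (multiplicativity of $\phi$ and the twisting $\phi\theta=I_a\eta\phi$) in the correct order, since neither is available until the reduced relation $(\star\star)$ has been isolated.
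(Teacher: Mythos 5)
Your proof is correct: the choices $a=\psi(e)$ and $\phi=R(a^{-1})\psi$, the extraction of the tail identity from the all-$e$ substitution, and the order in which you recover first $\phi(ub)=\phi(u)\ast\phi(b)$, then $\phi\theta=I_a\eta\phi$, and only then full multiplicativity of $\phi$, all check out, as does the telescoping in the converse. Note that the paper itself states this result (Theorem 2.3) without proof, citing Khodabandeh--Shahryari; your argument is a complete self-contained proof along the same standard lines as that reference, so there is nothing to flag.
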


\section{Profinite polyadic groups}
A profinite polyadic group is the inverse limit of an inverse system of finite polyadic groups. More precisely, let $(I, \leq)$ be a directed set and suppose $\{ (G_i,f_i), \varphi_{i j}, I\}$ is an inverse system of finite polyadic groups. This means that for every pair $(i, j)$ of elements of $I$ with $j\leq i$, we are given a polyadic homomorphism
$$
\varphi_{i j}:(G_i, f_i)\to (G_j, f_j)
$$
such that the equality $\varphi_{j k}\varphi_{i j}=\varphi_{i k}$ holds for all $k\leq j\leq i$. Now, assume that
$$
(G, f)=\varprojlim_i (G_i, f_i).
$$
Then $(G, f)$ is called a profinite polyadic group.  From now on, we consider the pair $(G, f)$ which is the above mentioned inverse limit. A realization of this pair can be given as follows: Let $\prod_i(G_i, f_i)$ be the direct product of the family $\{ (G_i, f)_i\}_{i\in I}$. This is a polyadic group with the $n$-ary operation
$$
(\prod f_i)((x_{i1}), (x_{i2}), \ldots, (x_{in}))=(f_i(x_{i1}, x_{i2}, \ldots, x_{in}))_{i\in I}.
$$
Here of course, we denoted an arbitrary element of the direct product as sequence $(a_i)_{i\in I}$ or simply $(a_i)$. Now, we have
$$
G=\{ (x_i)_{i\in I}:\ \forall j\leq i\ \varphi_{i j}(x_i)=x_j\},
$$
and hence
$$
f((x_{i1}), (x_{i2}), \ldots, (x_{in}))=(f_i(x_{i1}, x_{i2}, \ldots, x_{in}))_{i\in I}.
$$

Note that, as each $G_i$ is finite, being a closed subspace of the direct product of a family of finite sets, $(G, f)$ is compact, Hausdorff, and totally disconnected topological polyadic group, of course, if it has been shown that $G$ is non-empty.  Indeed, using standard topological arguments, we can prove that $G\neq\emptyset$ as every $G_i$ is compact.

Recall that, according to Hossz\'{u} -Gloskin's theorem, we have $(G_i, f_i)=\Gfi$, for some ordinary group $(G_i, \bullet_i)$, an element $b_i\in G_i$, and an automorphism $\theta_i$, satisfying the conclusions of Theorem 2.1. We will prove that in some sense, there exists a binary operation $\bullet$ on $G$ such that
$$
(G, \bullet)=\varprojlim_i(G_i, \bullet_i),
$$
and hence $(G, \bullet)$ will be proved to be profinite. Consider the polyadic homomorphism $\varphi_{i j}$. According to Theorem 2.3, there exist an element $a_{i j}\in G_j$, and a  group homomorphism $\psi_{i j}:(G_i, \bullet_i)\to (G_j, \bullet_j)$, such that
$$
\varphi_{i j}=R(a_{i j})\psi_{i j}.
$$
Further, we have the following equalities:\\

$1.\ f_j(a_{i j}, a_{i j}, \ldots, a_{i j})=\psi_{i j}(b_i)\bullet_j a_{i j}$,\\

$2.\ \psi_{i j}\theta_i=I(a_{i j}^{-1})\theta_j\psi_{i j}$.\\

For any triple of indices $k\leq j\leq i$, we have
$$
\varphi_{i j}=R(a_{i j})\psi_{i j}, \ \varphi_{i k}=R(a_{i k})\psi_{i k}, \ \varphi_{j k }=R(a_{j k })\psi_{j k },
$$
therefore
$$
a_{i j}=\varphi_{i j}(1), \ a_{i k}=\varphi_{i k}(1), \ a_{j k }=\varphi_{j k}(1).
$$
Note that in each equality, $1$ is the identity element of the corresponding group. Since $\varphi_{i k}=\varphi_{j k}\varphi_{i j}$, so we have
$$
a_{i k}=\varphi_{j k}(a_{ ij}).
$$
Now, let $Y_i$ be the set of all sequences $(x_j)$ (in the direct product) such that for any $j$ and $k$ $\leq i$, we have $\varphi_{j k}(x_j)=x_k$. This set is non-empty, because we can consider a sequence where $x_j=a_{i j}$, for $j\leq i$, and for all other $j$, $x_j$ is arbitrary. This sequence will be an element of $Y_i$. The set $Y_i$ is closed and if $i\leq s$, then $Y_s\subseteq Y_i$. As the direct product is compact, and the family $\{ Y_i\}$ has finite intersection property, we have
$$
\bigcap_iY_i\neq \emptyset,
$$
showing that $G$ is not empty.

Our first result, shows that the property of being profinite is inherited by the retract and Post's cover:\\

\begin{proposition}
Let $(G, f)=\Gf$ be a profinite polyadic group. Then the retract  group $(G, \bullet)$ and the Post's cover are also profinite.
\end{proposition}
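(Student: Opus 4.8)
The plan is to treat the retract first and then bootstrap to Post's cover. The obstacle flagged by the relations $1$ and $2$ above is that the ordinary homomorphisms $\psi_{ij}$ coming from Hossz\'u--Gloskin do \emph{not} form an inverse system: the right translations $R(a_{ij})$ spoil the identity $\psi_{jk}\psi_{ij}=\psi_{ik}$. I would sidestep this entirely by never using the $\psi_{ij}$. Fix once and for all an element $a=(a_i)_{i\in I}\in G$ (possible since $G\neq\emptyset$); compatibility means $\varphi_{ij}(a_i)=a_j$ for all $j\leq i$. Form the retract $(G,\bullet)=\mathrm{ret}_a(G,f)$, where $x\bullet y=f(x,\stackrel{(n-2)}{a},y)$. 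Because $f$ is computed coordinatewise, this operation is coordinatewise the retract $\bullet_i=\mathrm{ret}_{a_i}(G_i,f_i)$, i.e. $(x_i)\bullet(y_i)=(x_i\bullet_i y_i)$. The key point is that, for this particular choice of retracting element, each polyadic homomorphism $\varphi_{ij}$ is automatically an ordinary group homomorphism $(G_i,\bullet_i)\to(G_j,\bullet_j)$: applying $\varphi_{ij}$ to $f_i(x_i,\stackrel{(n-2)}{a_i},y_i)$ and using $\varphi_{ij}(a_i)=a_j$ gives $f_j(\varphi_{ij}(x_i),\stackrel{(n-2)}{a_j},\varphi_{ij}(y_i))$. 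Hence $\{(G_i,\bullet_i),\varphi_{ij}\}$ is an inverse system of finite ordinary groups with the same transition maps and the same inverse limit set, so $(G,\bullet)=\varprojlim_i(G_i,\bullet_i)$ is profinite. Since all retracts of $(G,f)$ are isomorphic and the isomorphisms between them are expressed through the continuous operation $f$ (hence are homeomorphisms), the Hossz\'u--Gloskin retract is profinite as well.

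For Post's cover I would use its universal property together with the first part. Each $(G_i,f_i)$ is finite, so its Post cover $G_i^{\ast}$ is a finite group (it is an extension of $\mathbb{Z}_{n-1}$ by the retract $K_i$, and $|G_i^{\ast}|=(n-1)|G_i|$). Composing $\varphi_{ij}$ with the embedding $(G_j,f_j)\hookrightarrow\mathrm{der}^n(G_j^{\ast},\circ_j)$ of Theorem $2.2$(4) gives a polyadic homomorphism, so the universal property of $G_i^{\ast}$ produces a unique ordinary homomorphism $\varphi_{ij}^{\ast}:G_i^{\ast}\to G_j^{\ast}$ with $\varphi_{ij}^{\ast}|_{G_i}=\varphi_{ij}$. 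Since $G_i$ generates $G_i^{\ast}$, two homomorphisms out of $G_i^{\ast}$ agreeing on $G_i$ coincide; checking $\varphi_{jk}^{\ast}\varphi_{ij}^{\ast}$ and $\varphi_{ik}^{\ast}$ on $G_i$ shows $\{G_i^{\ast},\varphi_{ij}^{\ast}\}$ is an inverse system of finite groups. Set $H=\varprojlim_i G_i^{\ast}$, a profinite group. The projections $G\to G_i\hookrightarrow G_i^{\ast}$ assemble into a polyadic homomorphism $\iota:(G,f)\to\mathrm{der}^n(H,\circ)$, so the universal property of $G^{\ast}$ yields an ordinary homomorphism $h:G^{\ast}\to H$ with $h|_G=\iota$.

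It remains to see that $h$ is an isomorphism, and this is where the real work lies. Both groups are extensions of $\mathbb{Z}_{n-1}$ by a profinite kernel: for $G^{\ast}$ this is Theorem $2.2$, with kernel $K\cong$ retract of $(G,f)$; for $H$ one applies $\varprojlim$ to the exact sequences $1\to K_i\to G_i^{\ast}\to\mathbb{Z}_{n-1}\to 1$, which stays exact because $\varprojlim^{1}$ vanishes for an inverse system of finite groups, giving $1\to\varprojlim_i K_i\to H\to\mathbb{Z}_{n-1}\to 1$ (the transition maps induce the identity on $\mathbb{Z}_{n-1}$). The map $h$ is compatible with the projections onto $\mathbb{Z}_{n-1}$ and, by the first part, restricts to the isomorphism $K\cong\varprojlim_i K_i$ between the retract of $(G,f)$ and the inverse limit of the retracts of the $(G_i,f_i)$; the short five lemma then forces $h$ to be an isomorphism, so $G^{\ast}\cong H$ is profinite. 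I expect the main obstacle to be precisely this identification $h|_K\colon K\xrightarrow{\sim}\varprojlim_i K_i$, i.e. matching the abstract kernel of $G^{\ast}$ with the inverse limit of the retracts; the danger is the usual profinite pitfall that $\iota(G)$ only generates $H$ topologically, which is exactly what the five-lemma argument is designed to circumvent once the kernels are known to match. Alternatively, using the explicit construction of $G^{\ast}$ from \cite{Shah2} as a quotient of $(G,\bullet)\rtimes_{\theta}\mathbb{Z}$ by an infinite cyclic central subgroup, one can topologize $G^{\ast}$ directly and read off that it is compact, Hausdorff and totally disconnected, provided the defining automorphism $\theta$ is continuous, which is the case since $\theta$ is realized through the continuous operation $f$.
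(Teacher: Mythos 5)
Your argument is correct, but it follows a genuinely different (and heavier) route than the paper on both halves. For the retract, the paper simply observes that $x\bullet y=f(x,\stackrel{(n-2)}{a},y)$ and $x^{-1}=f(\overline{a},\stackrel{(n-3)}{x},\overline{x},\overline{a})$ are continuous, so $(G,\bullet)$ is a compact, Hausdorff, totally disconnected topological group and hence profinite by the classical binary characterization; you instead pick a compatible base point $(a_i)\in G$ and exhibit $(G,\bullet)$ directly as $\varprojlim_i\mathrm{ret}_{a_i}(G_i,f_i)$ -- this is precisely the content and proof of the paper's Proposition 3.2, which it states separately right after, so in effect you have merged 3.1 and 3.2 and avoided invoking the topological characterization theorem. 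For the Post cover, the paper's proof is a one-liner: $K$ is isomorphic to the (profinite) retract and has finite index $n-1$ in $G^{\ast}$, so $G^{\ast}$ is profinite as a finite extension; you instead build the inverse system $\{G_i^{\ast},\varphi_{ij}^{\ast}\}$ from the universal property and identify $G^{\ast}$ with $\varprojlim_i G_i^{\ast}$ via the short five lemma applied to the extensions by $\mathbb{Z}_{n-1}$. Your version costs more work (the kernel identification $h|_K\colon K\to\varprojlim_i K_i$, which you correctly flag and which does go through because every element of $K$ has the form $x\circ a^{-1}$ with $x\in G$), but it buys the stronger statement that the Post cover commutes with inverse limits, and it makes the topology on $G^{\ast}$ explicit -- something the paper's ``finite extension of a profinite group'' step leaves implicit, since one must still check that conjugation by elements of $G$ acts on $K$ by continuous automorphisms before declaring $K$ open topologizes $G^{\ast}$ as a topological group.
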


\begin{proof}
We know that $(G, \bullet)=\mathrm{ret}_a(G, f)$ for some $a$. As we have
$$
x\bullet y=f(x,\stackrel{(n-2)}{a},y),
$$
and
$$
x^{-1}=f(\overline{a},\stackrel{(n-3)}{x},\overline{x},\overline{a}),
$$
so, the group $(G, \bullet)$ is a topological group which is compact, Hausdorff, and totally disconnected. This shows that the retract is profinite.
We know that there exists a normal subgroup $K$ of the Post's cover which has the index $n-1$ and $K$ is isomorphic to the retract $(G, \bullet)$. Hence, $K$ is profinite. Now, being a finite extension of a profinite group, $G^{\ast}$ is also profinite.
\end{proof}

The next result shows that, in some sense, the retract is the inverse limit of the retracts of the polyadic groups $(G_i, f_i)$.

\begin{proposition}
Let $(G, f)=\Gf$ be  a polyadic group and $(G, f)=\varprojlim_i (G_i, f_i)$. Then there exist elements $v_i\in G_i$, such that
$$
(G, \bullet)=\varprojlim_i\mathrm{ret}_{v_i}(G_i, f_i).
$$
\end{proposition}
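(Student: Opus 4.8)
The plan is to exhibit the prescribed retract $(G, \bullet)$ as a coordinatewise inverse limit of suitably chosen retracts of the factors $(G_i, f_i)$. As recorded in the proof of Proposition 3.1, the group $(G, \bullet)$ is of the form $\mathrm{ret}_a(G, f)$ for some fixed $a \in G$, i.e. $x \bullet y = f(x, \stackrel{(n-2)}{a}, y)$. Since $G$ sits inside $\prod_i G_i$, I would write $a = (a_i)_{i \in I}$ with $a_i \in G_i$ and simply put $v_i = a_i$. The one structural property driving the whole argument is that $a$ is a thread of the system: by the description of $G = \varprojlim_i(G_i, f_i)$, we have $\varphi_{ij}(a_i) = a_j$ whenever $j \leq i$.

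First I would check that, for this choice of $v_i$, the bonding maps $\varphi_{ij}$ are ordinary homomorphisms between the retracts. Writing the retract operations as $x \star_i y = f_i(x, \stackrel{(n-2)}{v_i}, y)$ and $x \star_j y = f_j(x, \stackrel{(n-2)}{v_j}, y)$, and using that $\varphi_{ij}$ preserves the $n$-ary operations, I get
$$
\varphi_{ij}(x \star_i y) = f_j\bigl(\varphi_{ij}(x), \stackrel{(n-2)}{\varphi_{ij}(v_i)}, \varphi_{ij}(y)\bigr) = f_j\bigl(\varphi_{ij}(x), \stackrel{(n-2)}{v_j}, \varphi_{ij}(y)\bigr) = \varphi_{ij}(x) \star_j \varphi_{ij}(y),
$$
where the middle step is exactly the thread identity $\varphi_{ij}(v_i) = v_j$. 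Thus $\{\,\mathrm{ret}_{v_i}(G_i, f_i),\ \varphi_{ij}\,\}$ is an inverse system of finite groups.

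It then remains to identify its limit with $(G, \bullet)$. Since the bonding maps of the group system are literally the same $\varphi_{ij}$, the underlying set of $\varprojlim_i \mathrm{ret}_{v_i}(G_i, f_i)$ is the set of compatible threads $\{(x_i) : \varphi_{ij}(x_i) = x_j\}$, which is precisely $G$, and its group law is coordinatewise, $(x_i)(y_i) = (x_i \star_i y_i)$. Finally I would confirm that this coordinatewise law is $\bullet$: since $f$ acts coordinatewise and $a = (a_i)$,
$$
x \bullet y = f\bigl(x, \stackrel{(n-2)}{a}, y\bigr) = \bigl(f_i(x_i, \stackrel{(n-2)}{a_i}, y_i)\bigr)_{i \in I} = (x_i \star_i y_i)_{i \in I}.
$$
This yields $(G, \bullet) = \varprojlim_i \mathrm{ret}_{v_i}(G_i, f_i)$.

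The step I expect to be the crux is the verification that $\varphi_{ij}$ descends to a group homomorphism of the retracts: this works only because the $v_i$ are the coordinates of a single global element $a \in G$, so that the compatibility $\varphi_{ij}(v_i) = v_j$ holds; an arbitrary family $(v_i)$ would fail the displayed middle equality and destroy the homomorphism property. Everything else is routine bookkeeping about inverse limits, resting on the earlier fact (Proposition 3.1 and its proof) that the Hossz\'u--Gloskin group $(G, \bullet)$ is genuinely a retract $\mathrm{ret}_a(G, f)$.
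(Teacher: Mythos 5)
Your proposal is correct and follows essentially the same route as the paper: both arguments hinge on choosing the retract points $v_i$ as the coordinates of a single thread in $G$, so that $\varphi_{ij}(v_i)=v_j$ makes each bonding map a group homomorphism of retracts, and then identifying the limit with $(G,\bullet)$ coordinatewise. Your only (minor, and harmless) refinement is to insist that the thread be the specific element $a$ defining $\bullet$, whereas the paper picks an arbitrary thread and invokes the isomorphism of all retracts.
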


\begin{proof}
As $G\neq \emptyset$, we choose an arbitrary element $(v_i)\in G$. We know that all retracts of a polyadic group are isomorphic to each other. So, we consider the retract
$$
(G_i, \bullet_i)=\mathrm{ret}_{v_i}(G_i, f_i).
$$
By the construction of Sokolov (see \cite{Sok}), we have
$$
\theta_i(x)=f_i(\overline{v}_i, x, \stackrel{(n-2)}{v_i}),
$$
for any $x$. Also we have
$$
b_i=f_i(\overline{v}_i, \ldots, \overline{v}_i).
$$
Using this special form of the retract, we see that the maps $\varphi_{i j}$ are group homomorphisms as well, because
\begin{eqnarray*}
\varphi_{i j}(x\bullet_i y)&=&\varphi_{i j}(f_i(x, \stackrel{(n-2)}{v_i}, y))\\
                           &=&f_j(\varphi_{i j}(x), \stackrel{(n-2)}{\varphi_{i j}(v_i)}, \varphi_{ ij}(y))\\
                           &=&f_j(\varphi_{i j}(x), \stackrel{(n-2)}{v_j}, \varphi_{ ij}(y))\\
                           &=&\varphi_{i j}(x)\bullet_j \varphi_{i j}(y).
\end{eqnarray*}
Note that, here we use the fact $\varphi_{i j}(v_i)=v_j$ as we assumed that $(v_i)\in G$. This shows that the maps $\varphi_{i j}:G_i\to G_j$ are in the same time, group homomorphisms and $\{ (G_i, \bullet_i), \varphi_{i j}, I\}$ is an inverse system of finite groups. Obviously, $(G, \bullet)$ is the inverse limit of this system.
\end{proof}

Note that in some sense, the inverse of the above theorem is also true: if we consider a profinite group $(G, \bullet)$ together with a continuous automorphism $\theta$ and an element $b$ satisfying the requirement of  2.1, then the polyadic group $\Gf$ will be profinite. We will  see the proof soon.  One may ask also about the automorphism $\theta$ in the above proof. The above construction shows that, for any $(x_i)\in G$, we have
$$
\theta((x_i)_{i\in I})=(\theta_i(x_i))_{i\in I}.
$$
As a result, we see that the inverse limit commutes with $\mathrm{der}$:

\begin{corollary}
Let $\{ (G_i,f_i), \varphi_{i j}, I\}$ be an inverse system of finite polyadic groups and for any $i$, suppose $(G_i, f_i)=\Gfi$. Then
$$
\varprojlim_i\Gfi=\mathrm{der}_{\hat{\theta}, \hat{b}}\varprojlim_i(G_i, \bullet_i),
$$
where
$$
\hat{b}=(b_i)_{i\in I},
$$
and
$$
\hat{\theta}((x_i)_{i\in I})=(\theta_i(x_i))_{i\in I}.
$$
\end{corollary}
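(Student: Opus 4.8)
The plan is to prove the equality on the nose rather than up to isomorphism: I would exhibit both sides as the \emph{same} polyadic group, namely the same underlying set carrying the same $n$-ary operation. The underlying set of $\varprojlim_i (G_i,f_i)$ and of $\varprojlim_i (G_i,\bullet_i)$ is in each case the set of sequences $(x_i)$ in the direct product satisfying $\varphi_{i j}(x_i)=x_j$ for all $j\leq i$ --- literally the same set-theoretic condition, the only difference being whether the $\varphi_{i j}$ are read as polyadic or as group homomorphisms. So once I know the $\varphi_{i j}$ are group homomorphisms the two carriers coincide, and the whole content of the statement reduces to matching the two $n$-ary operations.

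First I would fix the data compatibly, exactly as in the proof of Proposition 3.2: choose a single thread $(v_i)_{i\in I}\in G$ and set $(G_i,\bullet_i)=\mathrm{ret}_{v_i}(G_i,f_i)$, so that by Sokolov's construction $\theta_i(x)=f_i(\overline{v}_i,x,\stackrel{(n-2)}{v_i})$ and $b_i=f_i(\overline{v}_i,\ldots,\overline{v}_i)$. Proposition 3.2 then guarantees that each $\varphi_{i j}$ is simultaneously a homomorphism $(G_i,\bullet_i)\to(G_j,\bullet_j)$ of ordinary groups, so that $\{(G_i,\bullet_i),\varphi_{i j},I\}$ is a genuine inverse system of finite groups and $\varprojlim_i(G_i,\bullet_i)$ is defined, with componentwise group operation.

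Next I would check that $\hat{\theta}$ and $\hat{b}$ are well defined on the limit. Since each $\varphi_{i j}$ is a polyadic homomorphism it preserves skew elements, $\varphi_{i j}(\overline{v}_i)=\overline{v}_j$, and combined with $\varphi_{i j}(v_i)=v_j$ the Sokolov formulas yield $\varphi_{i j}\theta_i=\theta_j\varphi_{i j}$ and $\varphi_{i j}(b_i)=b_j$. Hence $(\theta_i(x_i))_i$ and $(b_i)_i$ are again compatible sequences, so $\hat{\theta}$ is a well-defined (and, as the restriction of $\prod_i\theta_i$, continuous) automorphism of $\varprojlim_i(G_i,\bullet_i)$, and conditions $1$ and $2$ of Theorem 2.1 --- $\hat{\theta}(\hat{b})=\hat{b}$ and $\hat{\theta}^{\,n-1}(X)=\hat{b}X\hat{b}^{-1}$ --- hold componentwise because they hold for each $(\theta_i,b_i,\bullet_i)$. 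Thus $\mathrm{der}_{\hat{\theta},\hat{b}}\varprojlim_i(G_i,\bullet_i)$ is a legitimate polyadic group.

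Finally I would compare operations. For compatible sequences $X_1=(x_{i1}),\ldots,X_n=(x_{in})$ the $\mathrm{der}_{\hat{\theta},\hat{b}}$ operation, being computed componentwise in $\varprojlim_i(G_i,\bullet_i)$, has $i$-th entry
$$
x_{i1}\bullet_i\theta_i(x_{i2})\bullet_i\cdots\bullet_i\theta_i^{\,n-1}(x_{in})\bullet_i b_i,
$$
which by the Hossz\'u--Gloskin representation $(G_i,f_i)=\Gfi$ equals $f_i(x_{i1},\ldots,x_{in})$, i.e.\ the $i$-th component of $f(X_1,\ldots,X_n)$. So the two $n$-ary operations agree entrywise and the identity follows. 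The one point that genuinely requires care is the middle step: everything hinges on choosing the retract base points along one thread $(v_i)\in G$ so that the $\varphi_{i j}$ become group homomorphisms and so that $\hat{\theta}$ and $\hat{b}$ preserve the compatibility condition. Once this bookkeeping is arranged, the operation-matching is a single componentwise application of Theorem 2.1 and presents no further difficulty.
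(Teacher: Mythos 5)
Your proposal is correct and follows essentially the same route as the paper: the corollary is stated there without a separate proof precisely because it is the content of Proposition 3.2 (choosing the retracts $\mathrm{ret}_{v_i}(G_i,f_i)$ along a single thread $(v_i)\in G$ so that the $\varphi_{i j}$ become group homomorphisms) combined with the subsequent remark that $\theta$ and $b$ act componentwise. Your additional verifications --- that $\varphi_{i j}$ preserves skew elements, hence $\varphi_{i j}\theta_i=\theta_j\varphi_{i j}$ and $\varphi_{i j}(b_i)=b_j$, and the componentwise matching of the $n$-ary operations via Theorem 2.1 --- are exactly the details the paper leaves implicit.
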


We are ready now, to give a characterization of the profinite polyadic groups.

\begin{theorem}
 A topological  polyadic group $(G, f)$ is profinite, if and only if, it is compact, Hausdorff, and totally disconnected.
\end{theorem}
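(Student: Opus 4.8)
The forward implication requires no new work: a profinite polyadic group is by definition realized as a closed subspace of a product of finite discrete polyadic groups, and the discussion preceding Proposition 3.1 already records that such a space is compact, Hausdorff, and totally disconnected. So the plan is to concentrate on the converse, and the strategy is to push everything through the retract group by means of the Hossz\'u--Gloskin theorem and then to reassemble the polyadic structure using Corollary 3.3.

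Suppose then that $(G, f)$ is a compact, Hausdorff, totally disconnected topological polyadic group. First I would invoke Theorem 2.1 to write $(G, f) = \Gf$ for a binary group $(G, \bullet)$, an automorphism $\theta$, and an element $b$ with $\theta(b) = b$ and $\theta^{n-1}(x) = b x b^{-1}$. Exactly as in the proof of Proposition 3.1, the retract operation $x \bullet y = f(x, \stackrel{(n-2)}{a}, y)$ and the inversion $x^{-1} = f(\overline{a}, \stackrel{(n-3)}{x}, \overline{x}, \overline{a})$ are built from $f$ and the skew map, hence continuous, so $(G, \bullet)$ is a compact, Hausdorff, totally disconnected topological group; by the classical characterization of profinite groups it is profinite. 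Likewise $\theta(x) = f(\overline{a}, x, \stackrel{(n-2)}{a})$ is continuous.

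The heart of the argument, and the step I expect to be the main obstacle, is to manufacture enough \emph{polyadic} congruences with finite quotients. An open normal subgroup $N$ of $(G,\bullet)$ gives a finite quotient of the retract, but to descend the polyadic operation I need $N$ to be $\theta$-invariant. The decisive observation is that condition 2 of Theorem 2.1 makes $\theta^{n-1}$ inner: since $N \trianglelefteq G$ we get $\theta^{n-1}(N) = bNb^{-1} = N$, so the $\theta$-orbit $\{N, \theta(N), \ldots, \theta^{n-2}(N)\}$ is finite. Hence $M = \bigcap_{k=0}^{n-2}\theta^k(N)$ is an open, normal, $\theta$-invariant subgroup contained in $N$, and such $M$ form a neighborhood base at the identity. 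For each of them, $\theta$ and $b$ descend to an automorphism $\overline{\theta}$ and an element $\overline{b} = bM$ of the finite group $G/M$, automatically satisfying the hypotheses of Theorem 2.1, so $(G/M, \overline{f}) = \mathrm{der}_{\overline{\theta}, \overline{b}}(G/M, \bullet)$ is a finite polyadic group and the projections $G \to G/M$ are polyadic homomorphisms.

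Finally I would assemble the inverse limit. Since the $\theta$-invariant open normal subgroups $M$ are cofinal among all open normal subgroups of the profinite group $(G, \bullet)$, we have $(G,\bullet) = \varprojlim_M (G/M, \bullet)$. Applying Corollary 3.3, with $\hat{b} = (bM)_M$ corresponding to $b$ and $\hat{\theta}$ to $\theta$, gives $\varprojlim_M \mathrm{der}_{\overline{\theta}, \overline{b}}(G/M, \bullet) = \mathrm{der}_{\theta, b}(G, \bullet) = (G, f)$. As each factor is a finite polyadic group, $(G, f)$ is the inverse limit of an inverse system of finite polyadic groups, hence profinite, which completes the converse and with it the proof.
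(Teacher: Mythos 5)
Your proposal is correct and follows essentially the same route as the paper: profinite retract via the Hossz\'u--Gloskin decomposition, cofinality of the $\theta$-invariant open normal subgroups obtained by intersecting the (finite, since $\theta^{n-1}$ is inner) $\theta$-orbit of an open normal subgroup, and reassembly via Corollary 3.3. Your justification that the intersection is open is in fact slightly more explicit than the paper's.
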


\begin{proof}
We already have seen that a profinite polyadic group is compact, Hausdorff and totally disconnected. Now, assume that $(G, f)=\Gf$ is a topological polyadic group which is compact, Hausdorff, and totally disconnected. As we saw before, the retract $(G, \bullet)$ is also a topological group with the same properties, so it is a profinite group. This means that
$$
(G, \bullet)=\varprojlim \{ \frac{G}{K}:\ K\unlhd G, \ K=open\}.
$$
Let
$$
I=\{ K\unlhd G:\ K=open, \ \theta(K)\subseteq K\}, \ J=\{ K\unlhd G:\ K=open\}.
$$
We show that $I$ is a cofinal in the directed set $J$. In other words, we show that for any $L\in J$, there exists a $K\in I$ such that $K\subseteq L$. So, let $L\in J$ and consider
$$
K=\bigcap_{i=0}^{n-1}\theta^i(L).
$$
As $G$ is compact and $\theta$ is continuous, $K$ is an open normal subgroup of $(G, \bullet)$. It is also $\theta$-invariant, since if $u\in K$, then $$
\forall i \exists x_i\in L:\ u=\theta^i(x_i).
$$
Hence
$$
\theta(u)=\theta^{i+1}(x_i)\in \theta^{i+1}(L),
$$
and since $\theta^{n-1}(L)\subseteq L$, so $\theta(u)\in K$. This proves that
$$
(G, \bullet)=\varprojlim_{K\in I}\frac{G}{K}.
$$
Now, for each $K\in  I$, we can define an automorphism
$$
\theta_K:\frac{G}{K}\to \frac{G}{K}, \ \theta_K(xK)=\theta(x)K.
$$
Note that we also have
$$
\theta_K(bK)=bK, \ \theta_K^{n-1}(xK)=(bK)(xK)(bK)^{-1},
$$
therefore, we can consider the finite polyadic group
$$
(\frac{G}{K}, f_K)=\mathrm{der}_{\theta_K, bK}(\frac{G}{K}, \bullet).
$$
Now, as we have
$$
(G, \bullet)=\varprojlim_{K\in I}\frac{G}{K},
$$
the mapping $x\mapsto (xK)_{K\in I}$ is an isomorphism, so every element in the right hand side can be represented as a
sequence $(xK)_{K\in I}$, for a unique $x\in G$. This means that the automorphism
$$
\hat{\theta}((xK)_K)=(\theta(x)K)_K,
$$
is the same as $\theta$. Similarly, we have $\hat{b}=b$. Therefore, by the previous corollary, we have
\begin{eqnarray*}
(G, f)&=& \Gf\\
      &=&\mathrm{der}_{\theta, b}(\varprojlim_{K\in I}(\frac{G}{K}, \bullet))\\
      &=&\varprojlim_{K\in I}\mathrm{der}_{\theta_K, bK}(\frac{G}{K}, \bullet),
\end{eqnarray*}
and this shows that $(G, f)$ is profinite.
\end{proof}

As a result, we have

\begin{corollary}
A polyadic group $(G, f)=\Gf$ is profinite, if and only if, $(G, \bullet)$ is profinite and $\theta$ is continuous.
\end{corollary}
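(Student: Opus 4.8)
The plan is to prove the two implications separately, leaning on Theorem 3.3, Proposition 3.1 and especially Corollary 3.3, so that essentially no new computation is needed.

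For the forward direction, I would suppose $(G,f)$ is profinite, so by Theorem 3.3 it is compact, Hausdorff and totally disconnected. First I would record that in the inverse-limit realization every fundamental operation acts coordinatewise, whence both $f$ and the skew map $x\mapsto\overline{x}$ are continuous (each factor $G_i$ being finite discrete, a coordinatewise map restricted to the closed subspace $G$ is continuous). Since $\bullet$ is a retract operation, the formulas $x\bullet y=f(x,\stackrel{(n-2)}{a},y)$ and $x^{-1}=f(\overline{a},\stackrel{(n-3)}{x},\overline{x},\overline{a})$ exhibit $\bullet$ and inversion as continuous, so $(G,\bullet)$ is a compact, Hausdorff, totally disconnected group, i.e. profinite; this is exactly the argument of Proposition 3.1. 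For $\theta$, I would invoke Sokolov's formula $\theta(x)=f(\overline{v},x,\stackrel{(n-2)}{v})$ from the proof of Proposition 3.2: as a composite of the continuous maps $f$ and skew, $\theta$ is continuous. (Equivalently, Corollary 3.3 identifies $\theta$ with $\hat{\theta}=\varprojlim\theta_i$, the restriction of a product of continuous maps.)

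For the converse, I would assume $(G,\bullet)$ is profinite and $\theta$ is continuous, and reproduce the cofinality argument from the proof of Theorem 3.3. Writing $(G,\bullet)=\varprojlim_{K\in J}G/K$ over the open normal subgroups $K$, I would set $I=\{K\unlhd G: K \text{ open},\ \theta(K)\subseteq K\}$ and show $I$ is cofinal in $J$. Given $L\in J$, the subgroup $K=\bigcap_{i=0}^{n-1}\theta^i(L)$ is open and normal (here continuity of $\theta$ together with compactness of $G$ makes $\theta$ a homeomorphism, so each $\theta^i(L)$ is open), it is contained in $L$, and it is $\theta$-invariant because $\theta^{n-1}(L)=bLb^{-1}=L$. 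Hence $(G,\bullet)=\varprojlim_{K\in I}G/K$.

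Finally, for each $K\in I$ the induced map $\theta_K(xK)=\theta(x)K$ is an automorphism of $G/K$ fixing $bK$ and satisfying $\theta_K^{n-1}(xK)=(bK)(xK)(bK)^{-1}$, so $(G/K,f_K)=\mathrm{der}_{\theta_K,bK}(G/K,\bullet)$ is a finite polyadic group. Since $\theta=\hat{\theta}$ and $b=\hat{b}$ under the identification $x\mapsto(xK)_{K\in I}$, Corollary 3.3 would give
\[
(G,f)=\mathrm{der}_{\theta,b}\bigl(\varprojlim_{K\in I}G/K\bigr)=\varprojlim_{K\in I}\mathrm{der}_{\theta_K,bK}(G/K,\bullet),
\]
exhibiting $(G,f)$ as an inverse limit of finite polyadic groups, hence profinite. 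I expect the only real obstacle to be the cofinality of the $\theta$-invariant open normal subgroups; but this is precisely the step already carried out inside Theorem 3.3, and its key input — that $\theta^{n-1}$ is the inner automorphism by $b$ and therefore preserves every normal subgroup — is guaranteed by clause 2 of Hossz\'u--Gloskin's theorem.
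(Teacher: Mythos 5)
Your proposal is correct and follows exactly the route the paper intends: the corollary is stated without a separate proof as an immediate consequence of Proposition 3.1, Corollary 3.3 and (the proof of) Theorem 3.4, and your argument is precisely that derivation spelled out --- continuity of $\theta$ via Sokolov's formula for the forward direction, and the cofinality of the $\theta$-invariant open normal subgroups for the converse. The only remark worth making is that the converse could be compressed further: once $(G,\bullet)$ is profinite and $\theta$ is continuous, $f(x_1^n)=x_1\theta(x_2)\cdots\theta^{n-1}(x_n)b$ is continuous, so $(G,f)$ is a compact, Hausdorff, totally disconnected topological polyadic group and Theorem 3.4 applies verbatim without repeating its internal argument.
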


Now, we consider a more general case, where a class $\mathfrak{Y}$ of finite polyadic groups is given and discuss pro-$\mathfrak{Y}$ polyadic groups: polyadic groups which are the inverse limits of  inverse systems of elements of $\mathfrak{Y}$. Let $\mathfrak{X}$ be an arbitrary class of groups. Define a new class
$$
\mathrm{Pol}_n(\mathfrak{X})=\{ (G, f)=n-ary:\ \mathrm{ret}(G, f)\in \mathfrak{X}\}.
$$

\begin{proposition}
If the class $\mathfrak{X}$ is closed under each of the closure operators: subgroup,  direct product, quotient, or  subdirect product, then the class $\mathrm{Pol}_n(\mathfrak{X})$ is also closed under the similar operation.
\end{proposition}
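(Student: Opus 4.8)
The plan is to reduce every case to a single structural observation: the retract construction is functorial with respect to each of the four operations, in the sense that the retract of the composite polyadic object is isomorphic to the corresponding composite of the retracts. Once this is established, the hypothesis on $\mathfrak{X}$ applies verbatim. Because all retracts of a polyadic group are isomorphic, it suffices in each case to compute $\mathrm{ret}_a$ for a single, well-chosen base point $a$, so the main task is to choose $a$ compatibly and to check that the binary operation $x\bullet y=f(x,\stackrel{(n-2)}{a},y)$ transports correctly.

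First I would treat the subgroup case. Given a polyadic subgroup $(H,f)$ of $(G,f)\in\mathrm{Pol}_n(\mathfrak{X})$ and a fixed $a\in H$, the operation $x\bullet y=f(x,\stackrel{(n-2)}{a},y)$ is computed from $f$ and $a$ alone; since $H$ is closed under $f$, the group $\mathrm{ret}_a(H,f)$ is exactly the subgroup of $\mathrm{ret}_a(G,f)$ carried by $H$. As $\mathrm{ret}_a(G,f)\in\mathfrak{X}$ and $\mathfrak{X}$ is subgroup-closed, we obtain $(H,f)\in\mathrm{Pol}_n(\mathfrak{X})$. For the direct product, given $(G_i,f_i)\in\mathrm{Pol}_n(\mathfrak{X})$, I would pick the componentwise base point $a=(a_i)$; since the $n$-ary operation of $\prod_i(G_i,f_i)$ acts coordinatewise, one reads off directly that $\mathrm{ret}_a\big(\prod_i(G_i,f_i)\big)=\prod_i\mathrm{ret}_{a_i}(G_i,f_i)$ as ordinary groups, and closure of $\mathfrak{X}$ under direct products finishes this case.

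The quotient case is the one requiring a genuine new verification, namely that a polyadic congruence descends to a group congruence of the retract. If $R$ is a congruence of $(G,f)$ and $x\mathrel{R}x'$, $y\mathrel{R}y'$, then $f(x,\stackrel{(n-2)}{a},y)\mathrel{R}f(x',\stackrel{(n-2)}{a},y')$ by compatibility of $R$ with $f$, so $x\bullet y\mathrel{R}x'\bullet y'$; thus $R$ is a congruence of the binary group $(G,\bullet)=\mathrm{ret}_a(G,f)$. A short computation then gives $\mathrm{ret}_{[a]}(G/R,f_R)=(G,\bullet)/R$, an ordinary quotient of $\mathrm{ret}(G,f)\in\mathfrak{X}$, and quotient-closure of $\mathfrak{X}$ yields the conclusion.

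Finally, the subdirect-product case follows by combining the subgroup and direct-product arguments: a subdirect product of the $(G_i,f_i)$ is a polyadic subgroup $H$ of $\prod_i(G_i,f_i)$ with surjective coordinate projections, and with the componentwise base point its retract is a subgroup of $\prod_i\mathrm{ret}_{a_i}(G_i,f_i)$. The point I expect to require the most care is confirming that the projections remain surjective at the level of retracts. This is, however, immediate: each projection $\pi_j\colon\prod_i(G_i,f_i)\to(G_j,f_j)$ fixes the chosen base point, and the computation $\pi_j(x\bullet y)=\pi_j(x)\bullet_j\pi_j(y)$ shows it restricts to a group homomorphism of the retracts which is surjective whenever the underlying polyadic projection is. Hence $\mathrm{ret}(H)$ is a subdirect product of the $\mathrm{ret}(G_i,f_i)\in\mathfrak{X}$, and subdirect-product-closure of $\mathfrak{X}$ completes the proof.
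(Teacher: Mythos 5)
Your proposal is correct and follows essentially the same route as the paper: verify that the retract of a subgroup, product, quotient, or subdirect product is (isomorphic to) the corresponding subgroup, product, quotient, or subdirect product of retracts, then invoke the closure hypothesis on $\mathfrak{X}$. The only cosmetic difference is in the quotient case, where you observe directly that $R$ is a congruence of the binary retract while the paper constructs the epimorphism $\lambda(x)=[x]_R$ and applies the first isomorphism theorem, and in the subdirect-product case, which you spell out while the paper dismisses it as a similar argument.
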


\begin{proof}
Let $\mathfrak{X}$ be closed under subgroup and $(G, f)\in \mathrm{Pol}_n(\mathfrak{X})$. Let $(H, f)\leq (G, f)$. Then for any $a\in H$, we have 
$$
\mathrm{ret}_a(H, f)\leq \mathrm{ret}_a(G, f)\in \mathfrak{X},
$$
and so $\mathrm{ret}_a(H, f)\in \mathfrak{X}$, which shows that $(H, f)\in \mathrm{Pol}_n(\mathfrak{X})$. 

Now, assume that $\mathfrak{X}$ is closed under direct product and $(G_i, f_i)\in \mathrm{Pol}_n(\mathfrak{X})$ be a family of polyadic groups, where $i\in I$. For any arbitrary sequence $(a_i)_i\in \prod_i(G_i, f_i)$ we have 
$$
\mathrm{ret}_{(a_i)}(\prod_i(G_i, f_i))=\prod_i\mathrm{ret}_{a_i}(G_i, f_i)\in \mathfrak{X}, 
$$
and this shows that $\prod_i(G_i, f_i)\in \mathrm{Pol}_n(\mathfrak{X})$. 

Let $\mathfrak{X}$ be closed under quotient, $(G, f)\in \mathrm{Pol}_n(\mathfrak{X})$, and $R$ be a congruence of $(G, f)$. In the quotient polyadic group $(G/R, f_R)$, we have 
$$
f_R([x_1]_R, \ldots, [x_n]_R)=[f(x_1, \ldots, x_n)]_R, 
$$
where $[\ ]_R$ denotes the congruent class. Let $a\in G$ and define a map 
$$
\lambda: \mathrm{ret}_a(G, f)\to \mathrm{ret}_{[a]}(\frac{G}{R}, f_R)
$$
by $\lambda(x)=[x]_R$. We have 
\begin{eqnarray*}
\lambda(x\bullet y)&=&[x\bullet y]_R\\
                   &=&[f(x, \stackrel{(n-2)}{a}, y)]_R\\
                   &=&f_R([x], \stackrel{(n-2)}{[a]}, [y])\\
                   &=&[x]_R\bullet [y]_R.
\end{eqnarray*}
This shows that $\lambda$ is a group epimorphism and hence 
$$
\mathrm{ret}_{[a]}(\frac{G}{R}, f_R)\cong \frac{\mathrm{ret}_a(G, f)}{\ker \lambda}\in \mathfrak{X}. 
$$
As a result, $(G/R, f_R)\in \mathrm{Pol}_n(\mathfrak{X})$. The case of subdirect product can be proved by a similar argument.
\end{proof}

Therefore $\mathrm{Pol}_n(\mathfrak{X})$ will be a variety, if we begin with a variety of groups $\mathfrak{X}$. It could be a good question if one ask about the set of identities of this variety. We are not interested in such problems in this work.  Recall that a pseudo-variety of finite groups, is a class of finite groups which is closed under subgroup, quotient and finite direct product. Similarly, a formation of finite groups, is a class of finite groups which is closed under quotient and finite subdirect products. Now, we are ready to prove the next result. In what follows, a polyadic $\mathfrak{X}$-group means a polyadic $\mathrm{Pol}_n(\mathfrak{X})$-group. So, the name $\mathfrak{X}$ will be used both for the class of groups and its corresponding class of polyadic groups. 

\begin{theorem}
Let $\mathfrak{X}$ be a pseudo-variety (formation) of finite groups. Let $(G, f)$ be a pro-$\mathfrak{X}$ polyadic group and $R$ be an open congruence of it. Then
$$
(\frac{G}{R}, f_R)\in \mathrm{Pol}_n(\mathfrak{X}).
$$
\end{theorem}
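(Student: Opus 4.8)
The plan is to reduce the statement to the quotient-closure of $\mathrm{Pol}_n(\mathfrak{X})$ established in the preceding proposition, by exhibiting the finite polyadic group $(G/R, f_R)$ as a homomorphic image of one of the finite factors $(G_i, f_i)$. Write $(G, f) = \varprojlim_i (G_i, f_i)$ with all $(G_i, f_i) \in \mathrm{Pol}_n(\mathfrak{X})$, and let $p_i : G \to G_i$ be the canonical projections.

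First I would record that $(G/R, f_R)$ is finite: since $R$ is an open congruence, its classes are open and partition the compact space $G$, so there are only finitely many of them. The central step is then to produce an index $i$ for which the kernel congruence $R_i := \{(x,y) : p_i(x) = p_i(y)\}$ is contained in $R$. Each $R_i$ is a clopen congruence; the family $\{R_i\}_{i \in I}$ is downward directed (if $i \leq i'$ then $R_{i'} \subseteq R_i$, because $p_i = \varphi_{i'i} p_{i'}$); and $\bigcap_i R_i$ is the diagonal $\Delta_G$, since distinct points of $G \subseteq \prod_i G_i$ differ in some coordinate. As $R$ is clopen, its complement $D = (G \times G) \setminus R$ is compact and disjoint from $\Delta_G$. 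Covering $D$ by the open sets $(G \times G) \setminus R_i$, extracting a finite subcover, and dominating the finitely many indices involved by a single $i$ using directedness, one obtains $D \cap R_i = \emptyset$, that is, $R_i \subseteq R$.

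Granting $R_i \subseteq R$, the congruence $R$ factors through $p_i$, so $(G/R, f_R)$ is a quotient of $(G/R_i, f_{R_i}) \cong p_i(G)$, where $p_i(G)$ is a subpolyadic group of $(G_i, f_i)$. The decisive point — and the one that makes the formation case work — is that $p_i(G)$ is not merely a subpolyadic group but is itself a homomorphic image of one of the factors: for an inverse system of finite objects one has $p_i(G) = \bigcap_{j \geq i} \varphi_{ji}(G_j)$, a downward directed intersection of finite sets, whence $p_i(G) = \varphi_{j_0 i}(G_{j_0})$ for some $j_0 \geq i$. Thus $p_i(G)$ is the image of the polyadic homomorphism $\varphi_{j_0 i}$, i.e. a quotient of $(G_{j_0}, f_{j_0}) \in \mathrm{Pol}_n(\mathfrak{X})$.

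Finally I would invoke the quotient-closure of $\mathrm{Pol}_n(\mathfrak{X})$, valid for both pseudo-varieties and formations since both are closed under quotients: $p_i(G) \in \mathrm{Pol}_n(\mathfrak{X})$, and then $(G/R, f_R)$, being a further quotient of $p_i(G)$, also lies in $\mathrm{Pol}_n(\mathfrak{X})$. I expect the main obstacle to be exactly the formation case: a formation need not be closed under subgroups, so the naive remark that $p_i(G)$ is a subpolyadic group of $G_i$ is of no use; the remedy is the image-stabilization identity $p_i(G) = \varphi_{j_0 i}(G_{j_0})$, which presents $p_i(G)$ as a genuine quotient and lets the whole argument proceed using quotient-closure alone, uniformly in the two cases.
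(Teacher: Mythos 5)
Your proof is correct, but it takes a genuinely different route from the paper's. The paper works inside the Hossz\'u--Gluskin presentation $(G,f)=\mathrm{der}_{\theta,b}(G,\bullet)$: it invokes the fact (cited from the simple polyadic groups paper) that an open congruence $R$ is a subgroup of the ordinary group $G\times G$, embeds $(G/R,f_R)$ via $[x]_R\mapsto (x,1)R$ into $\mathrm{der}_{\bar{\theta},\bar{b}}\bigl((G\times G)/R,\bullet\bigr)$, observes that $(G\times G)/R\in\mathfrak{X}$ because $G\times G$ is a pro-$\mathfrak{X}$ group and $R$ is an open subgroup, and then concludes by the \emph{subgroup}-closure of $\mathrm{Pol}_n(\mathfrak{X})$ --- which is precisely why the paper explicitly restricts its proof to the pseudo-variety case. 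You instead argue purely topologically and universal-algebraically: finiteness of $G/R$ from openness plus compactness, a compactness argument producing a kernel congruence $R_i\subseteq R$, and the image-stabilization identity $p_i(G)=\varphi_{j_0 i}(G_{j_0})$ for directed inverse systems of finite structures, so that everything reduces to \emph{quotient}-closure of $\mathrm{Pol}_n(\mathfrak{X})$. All the steps check out (the classes of an open congruence are open, $R$ is therefore clopen, the $R_i$ are clopen kernel congruences intersecting in the diagonal, and the stabilization of the directed family $\{\varphi_{ji}(G_j)\}_{j\geq i}$ of finite sets is standard, e.g.\ from Ribes--Zalesskii). Your route is longer and leans on general inverse-limit facts rather than the structure theory of polyadic congruences, but it buys uniformity: it proves the formation case as well, which the paper's proof explicitly leaves unaddressed, since a formation need not be closed under subgroups and the paper's final step genuinely requires subgroup closure.
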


\begin{proof}
Here we only consider the case where $\mathfrak{X}$ is a pseudo-variety. Assume that $(G, f)=\Gf$. Suppose $R\subseteq G\times G$ is an open congruence of the polyadic group $(G, f)$. In \cite{Khod-Shah2}, it is  proved that in this case, the equivalence relation $R$ is a subgroup of the ordinary group $G\times G$ (the letter $G$ here stands for the group $(G, \bullet)$). So, we define a map
$$
\psi:\frac{G}{R}\to \frac{G\times G}{R}
$$
by $\psi([x]_R)=(x,1)R$. This map is well-defined as if we suppose $[x]_R=[y]_R$, then $(x,y)\in R$ and so $(x,y)R=R$. This means that
$$
(x,1)R=(1, y^{-1})R,
$$
and as in the quotient we have $(1, y^{-1})R=(y, 1)R$, so the map is well-defined. Also, it is injective, since if $(x,1)R=(y,1)R$, then $(x^{-1}y,1)\in R$, so, we have also
$$
(y, x)=(x,x)(x^{-1}y,1)\in R.
$$
Note that 
\begin{eqnarray*}
\psi(f_R([x_1], \ldots, [x_n]))&=&\psi([f(x_1, \ldots, x_n)]_R)\\
                               &=&(f(x_1, \ldots, x_n), 1)R\\
                               &=&(x_1, 1)(\theta(x_2), 1)\cdots (\theta^{n-1}(x_n), 1)(b, 1)R\\
                               &=&f_{\frac{G\times G}{R}}((x_1, 1)R, \ldots, (x_n, 1)R).
\end{eqnarray*}
This shows that we indeed have a polyadic embedding 
$$
\psi: (\frac{G}{R}, f_R)\to \mathrm{der}_{\bar{\theta}, \bar{b}}(\frac{G\times G}{R}, \bullet). 
$$
As $(G, f)$ is assumed to be a pro-$\mathfrak{X}$ polyadic group, $(G, \bullet)$ is a pro-$\mathfrak{X}$ group, and hence $G\times G$ is so. Now, $R\unlhd G\times G$ is an open subgroup, so the quotient $G\times G/R$ belongs to $\mathfrak{X}$. This means that
$$
\mathrm{der}_{\bar{\theta}, \bar{b}}(\frac{G\times G}{R}, \bullet)\in \mathrm{Pol}_n(\mathfrak{X}),
$$
and hence $(G/R, f_R)\in \mathrm{Pol}_n(\mathfrak{X})$.  
\end{proof}

The converse of the above theorem is also true. For a proof, we need to proceed as in the theorem 3.4. 

\begin{theorem} 
Let $\mathfrak{X}$ be a  pseudo-variety (formation) of finite groups. Let $(G, f)$ be a topological polyadic group which is compact, Hausdorff, and totally disconnected. Assume that for any open congruence $R$, the polyadic group $(G/R, f_R)$ belongs to $\mathrm{Pol}_n(\mathfrak{X})$. Then $(G, f)$ is pro-$\mathfrak{X}$. 
\end{theorem}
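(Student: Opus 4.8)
The plan is to run the argument of Theorem 3.4 almost verbatim and to feed in the quotient hypothesis at exactly the point where one needs to know the retracts of the finite factors. Write $(G,f)=\Gf$. Since $(G,f)$ is compact, Hausdorff and totally disconnected, the retract $(G,\bullet)$ is a profinite group and $\theta$ is a continuous automorphism of it, exactly as established in the proof of Theorem 3.4. Setting
$$
I=\{K\unlhd G:\ K=\text{open},\ \theta(K)\subseteq K\},
$$
the same cofinality argument (forming $K=\bigcap_{i=0}^{n-1}\theta^i(L)$ for each open normal $L$) shows $I$ is cofinal among all open normal subgroups, so $(G,\bullet)=\varprojlim_{K\in I}G/K$. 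For each $K\in I$ one builds the finite polyadic group $(G/K,f_K)=\mathrm{der}_{\theta_K,bK}(G/K,\bullet)$, and Corollary 3.5 together with the identifications $\hat\theta=\theta$, $\hat b=b$ gives $(G,f)=\varprojlim_{K\in I}(G/K,f_K)$.

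The genuinely new step is to present each $K\in I$ as an open congruence of $(G,f)$, so that the hypothesis becomes applicable. Note first that $K$ open in the profinite group $(G,\bullet)$ means $K$ has finite index; since $\theta$ is an automorphism it preserves index, and hence $\theta(K)\subseteq K$ forces $\theta(K)=K$, i.e. $K$ is genuinely $\theta$-invariant. Now put $R_K=\{(x,y):xK=yK\}$. Substituting $y_i=x_ik_i$ with $k_i\in K$ into $f(y_1,\ldots,y_n)=y_1\theta(y_2)\cdots\theta^{n-1}(y_n)b$ and using $\theta^j(k_i)\in K$ (by $\theta$-invariance) together with normality to absorb every such factor into $K$, one checks that $R_K$ is compatible with $f$; it is open because $K$ is open. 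A direct computation of $f_{R_K}$ then shows that the quotient is precisely $(G/R_K,f_{R_K})=(G/K,f_K)=\mathrm{der}_{\theta_K,bK}(G/K,\bullet)$, whose retract is $(G/K,\bullet)$.

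Applying the standing hypothesis to the open congruence $R_K$ yields $(G/K,f_K)\in\mathrm{Pol}_n(\mathfrak{X})$, which by definition means $(G/K,\bullet)\in\mathfrak{X}$. Thus every finite factor of the inverse system $\{(G/K,f_K)\}_{K\in I}$ belongs to $\mathrm{Pol}_n(\mathfrak{X})$, and since $(G,f)=\varprojlim_{K\in I}(G/K,f_K)$, the polyadic group $(G,f)$ is by definition pro-$\mathfrak{X}$. (Equivalently, $(G,\bullet)=\varprojlim_{K\in I}G/K$ with all factors in $\mathfrak{X}$ exhibits $(G,\bullet)$ as pro-$\mathfrak{X}$.)

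I expect the one delicate point to be the congruence correspondence in the second paragraph: confirming that the $\theta$-invariant open normal subgroups $K$ produced by the Theorem 3.4 machinery are exactly the open congruences of $(G,f)$, and that $(G/R_K,f_{R_K})$ coincides on the nose with $\mathrm{der}_{\theta_K,bK}(G/K,\bullet)$. This is where the description of polyadic congruences as subgroups of $G\times G$ from \cite{Khod-Shah2} is invoked; once it is in place, the hypothesis supplies membership of the retracts in $\mathfrak{X}$ directly, and the remainder is the inverse-limit bookkeeping already carried out for Theorem 3.4. No closure operation on $\mathfrak{X}$ is actually consumed in this direction, so the formation case is handled by the identical argument.
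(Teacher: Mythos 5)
Your proposal is correct and follows exactly the route the paper itself indicates (the paper omits the proof, saying only ``proceed as in Theorem 3.4''): you rerun the $\theta$-invariant open normal subgroup construction, identify each such $K$ with the open congruence $R_K$, and invoke the hypothesis to place each finite quotient $(G/K, f_K)$ in $\mathrm{Pol}_n(\mathfrak{X})$. The one step you rightly flag as delicate --- that $R_K$ is a congruence and $(G/R_K, f_{R_K})=\mathrm{der}_{\theta_K, bK}(G/K,\bullet)$ --- is handled correctly via $\theta(K)=K$ (forced by finite index) and normality, so nothing is missing.
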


\end{document}